%% file: main.tex
\documentclass[11pt]{amsart}

\usepackage[T1]{fontenc}
\usepackage{lmodern}
\usepackage{microtype}
\usepackage{mathtools}
\usepackage{amssymb}
\usepackage{booktabs}
\usepackage{enumitem}
\usepackage{needspace}
\usepackage{xcolor}
\usepackage{tikz}
\usepackage{float}
\usetikzlibrary{positioning,fit,backgrounds}
\usepackage[hidelinks]{hyperref}
\usepackage[capitalise,noabbrev]{cleveref}

\newtheorem{theorem}{Theorem}[section]
\newtheorem{proposition}[theorem]{Proposition}
\newtheorem{corollary}[theorem]{Corollary}
\newtheorem{lemma}[theorem]{Lemma}
\theoremstyle{definition}
\newtheorem{definition}[theorem]{Definition}

\theoremstyle{remark}
\newtheorem{remark}[theorem]{Remark}

\newcommand{\tauD}{\tau_{D}}

\newcommand{\HY}{\mathsf{B}_{\mathrm{HY}}}
\newcommand{\margin}{\mu}

\title[Counterexamples to the Henning--Yeo Conjecture]
{Counterexamples to the Henning--Yeo Conjecture: Unbounded Fixed-Degree Gaps and Sharp First-Order Asymptotics}

\author{Yufeng Wang}
\thanks{Independent researcher. Email: \href{mailto:yufeng.wang.research@gmail.com}{yufeng.wang.research@gmail.com}.}
\date{August 19, 2026}

\subjclass[2020]{05C69, 05C35, 05D15}
\keywords{identifying vertex cover, packing number, counterexample, rooted product, maximum degree, extremal graph theory}

\begin{document}
\raggedbottom

\begin{abstract}
Henning and Yeo conjectured an upper bound on the identifying vertex cover
number of a graph in terms of its order, size, and maximum degree.  A
two-parameter family $H_{t,r}$ of connected diameter-two graphs disproves the
bound for every maximum degree at least four; after denominators are cleared,
its margin is exactly $-(t-1)(r-1)$.  The complement relation
$\tauD=n-\rho$ exposes the mechanism: diameter-two fibres admit at most one
packing vertex, while degree deficit accumulates under tree gluing with
controlled port loads.  Writing $A_\Delta$ for the supremal additive gap at
maximum degree exactly $\Delta$, an exact transfer formula gives
$A_\Delta=+\infty$ for every $\Delta\ge4$, using Petersen fibres in degrees
four and five and the original $H_{t,r}$ blocks in higher degrees.  If
$c_\Delta$ denotes the corresponding supremal gap per vertex, rooted
rook-graph fibres match a universal square-graph packing bound to first order.
Consequently, $c_\Delta\sim1/\Delta$, equivalently
$\lim_{\Delta\to\infty}\Delta c_\Delta=1$.
\end{abstract}

\maketitle

\section{Introduction}

An identifying vertex cover is a vertex set that meets every edge and has a
distinct nonempty intersection with every edge; its minimum cardinality is
denoted by $\tauD(G)$.  A packing is a set of vertices at pairwise distance at
least three, and $\rho(G)$ denotes its maximum cardinality.  Henning and Yeo
credit Moncel's thesis with the first observation that an identifying vertex
cover is precisely the complement of a packing~\cite[p.~2]{HenningYeo2012};
see also~\cite{Moncel2005}.  Thus
\[
   \tauD(G)+\rho(G)=|V(G)|,
\]
for every graph $G$.

For a graph of order $n$, size $m$, and maximum degree $\Delta$, Henning and
Yeo conjectured in 2012~\cite[Conjecture~3]{HenningYeo2012} that
\begin{equation}\label{eq:HY}
  \tauD(G)
  \le
  \frac{\Delta(\Delta-1)n+2m}{\Delta^2+1}.
\end{equation}
Write $\HY(G)$ for the right-hand side of \eqref{eq:HY}.  Henning and Yeo
proved the inequality for regular graphs and for graphs with maximum degree at
most three.  They also exhibited connected graphs whose identifying vertex
cover numbers approach $\HY(G)$~\cite{HenningYeo2012}.  These results made the
general bound plausible and left maximum degree four as the first unresolved
case.

Several later works studied related edge-separation parameters.  These include
distinguishing
transversals in hypergraphs, whose 2-uniform specialization is identifying
vertex cover~\cite{HenningYeo2014}; Tracking Set Systems, whose 2-uniform case
allows at most one empty trace and is therefore a relaxation
\cite{BanikEtAl2020}; and incidence generators, which distinguish edge pairs
without requiring every edge to be met and are related to
2-packings~\cite{BozovicEtAl2022}.  To the best of our knowledge, none of these
works addresses the numerical bound in \eqref{eq:HY}, and no prior published
work or publicly available preprint gives a counterexample to Conjecture~3.

The deficit form of \eqref{eq:HY} gives the design principle for the paper:
keep packing capacity small while allowing degree deficit to accumulate.
Diameter-two fibres achieve the first goal, and controlled port loads achieve
the second.  A single violating block then raises two severity questions: can
negative margin accumulate while the maximum degree stays fixed, and how large
can the violation be per vertex?  The classical gluing operations and packing
formulas used to answer these questions are attributed in
\cref{sec:loaded-trees}; our contributions are their exact Henning--Yeo margins
and the resulting extremal consequences.

Our contributions are as follows.
\begin{enumerate}[label=(\roman*),leftmargin=2.2em]
  \item We construct connected diameter-two graphs $H_{t,r}$ whose margin,
  after clearing denominators, is $-(t-1)(r-1)$.  Consequently,
  \eqref{eq:HY} fails for every maximum degree $\Delta\ge 4$.
  \item An exact loaded-tree transfer gives connected families whose additive
  gaps are unbounded for every fixed maximum degree $\Delta\ge4$.  Thus no
  degree-dependent constant correction repairs the bound.
  \item For the supremal normalized gap $c_\Delta$ over connected graphs of
  maximum degree exactly $\Delta$, rooted rook-graph fibres give a matching
  first-order lower bound, and we prove
  \[
    \lim_{\Delta\to\infty}\Delta c_\Delta=1.
  \]
  \item A direct exhaustive check based on the definition, covering every
  graph of order at most seven, shows that the order-eight graph $H_{2,2}$ is
  order-minimal.
\end{enumerate}
The construction and extremal estimates are proved symbolically and do not
depend on computation.  Computation is used only for the final minimal-order
statement.

\paragraph{Proof roadmap.}
The complement lemma and deficit identity first reveal what a counterexample
must do, and the diameter-two blocks answer the existence question.  The
loaded-tree transfer then answers whether negative margin can accumulate at
fixed maximum degree.  Finally, a square-graph upper bound and a rooted rook
construction meet at the normalized scale.  The exhaustive computation is
logically independent and is used only to locate the smallest witness.

\section{Preliminaries and a deficit reformulation}

All graphs in this paper are finite, simple, and undirected; for the null graph
we take the maximum degree to be zero.  For a graph $G$, write $n=|V(G)|$,
$m=|E(G)|$, and $\Delta=\Delta(G)$.  A set $T\subseteq V(G)$ is an
\emph{identifying vertex cover} if $e\cap T$ is nonempty for every
$e\in E(G)$ and the sets $e\cap T$ are pairwise distinct.  The minimum
cardinality of such a set is $\tauD(G)$.

A \emph{packing} is a set $S\subseteq V(G)$ whose distinct vertices have
pairwise distance at least three; its maximum cardinality is $\rho(G)$.  We
use the convention that vertices in different components have distance
$+\infty$.  We include the standard complement relation for completeness.

\begin{lemma}[Moncel~\cite{Moncel2005}; see also Henning--Yeo~\cite{HenningYeo2012}]\label{lem:complement}
A set $T\subseteq V(G)$ is an identifying vertex cover if and only if $V(G)\setminus T$ is a packing.  Consequently,
\[
   \tauD(G)+\rho(G)=n.
\]
\end{lemma}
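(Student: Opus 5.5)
We translate both conditions in the definition of an identifying vertex cover into statements about $S=V(G)\setminus T$, and show that together they say exactly that no two vertices of $S$ are adjacent or share a neighbour. Write $S=V(G)\setminus T$ throughout.

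\emph{Covering.} The condition that $e\cap T\neq\emptyset$ for every edge $e$ says that no edge has both endpoints in $S$. Equivalently, no two vertices of $S$ are at distance one.

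\emph{Distinct traces, first direction.} Suppose $T$ meets every edge, and let $e\neq f$ be edges with $e\cap T=f\cap T$. This common trace is nonempty. If it has two elements, then $e=f$, since $|e|=|f|=2$. So the common trace is a single vertex $\{v\}$, and we may write $e=vx$ and $f=vy$ with $x\neq y$ and $x,y\notin T$. Hence $x,y\in S$ have the common neighbour $v$, so their distance is at most two. By the covering condition it is not one, so it is exactly two.

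\emph{Distinct traces, second direction.} Conversely, suppose $x,y\in S$ are distinct and $d(x,y)=2$, with a common neighbour $v$. By the covering condition applied to the edges $vx$ and $vy$, we have $v\in T$. Then the two edges have the same trace, $vx\cap T=vy\cap T=\{v\}$, so $T$ is not identifying.

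Combining the three parts: $T$ is an identifying vertex cover if and only if no two distinct vertices of $S$ are at distance one or two. That is exactly the statement that $S$ is a packing. Vertices in different components are at distance $+\infty$ and impose no constraint, which agrees with the stated convention.

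\emph{The identity.} The map $T\mapsto V(G)\setminus T$ is a cardinality-reversing bijection between identifying vertex covers and packings. Minimizing $|T|$ is therefore the same as maximizing $|S|$, which gives $\tauD(G)=n-\rho(G)$.

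One degenerate case needs a remark. An identifying vertex cover exists for every graph, since taking $S=\emptyset$ (that is, $T=V(G)$) always gives a packing. The case $T=V(G)$ is itself identifying, because distinct edges are distinct $2$-sets and so have distinct traces. The only step needing any care is the observation that a shared trace must be a singleton, which uses $|e|=|f|=2$.
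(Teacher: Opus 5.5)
Your proof is correct and takes essentially the same route as the paper's. In both, covering is equivalent to independence of $S=V(G)\setminus T$; given covering, two distinct edges share a trace exactly when that trace is a singleton $\{v\}$ whose other endpoints are two vertices of $S$ at distance two through $v$. Your added remarks (why a shared trace must be a singleton, and existence via $T=V(G)$) only make explicit what the paper leaves implicit.
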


\begin{proof}
Let $S=V(G)\setminus T$.  If $T$ is an identifying vertex cover, then no two vertices of $S$ are adjacent.  Moreover, two vertices $u,v\in S$ cannot have a common neighbour $w$, because the edges $uw$ and $vw$ would both have intersection $\{w\}$ with $T$.  Thus distinct vertices of $S$ have distance at least three.

Conversely, suppose that $S$ is a packing and put $T=V(G)\setminus S$.  Since $S$ is independent, $T$ meets every edge.  If two distinct edges had the same intersection with $T$, that intersection would have to be a common singleton $\{w\}$; their other endpoints would then be two vertices of $S$ at distance two through $w$, a contradiction.  Hence $T$ identifies all edges.  Taking complementary minimum and maximum sets proves the identity.
\end{proof}

Define the Henning--Yeo bound and its integer margin by
\begin{align*}
  \HY(G)&=\frac{\Delta(\Delta-1)n+2m}{\Delta^2+1},\\
  \margin(G)&=\Delta(\Delta-1)n+2m-(\Delta^2+1)\tauD(G).
\end{align*}
Thus $G$ is a counterexample precisely when $\margin(G)<0$.  We call
\[
  \tauD(G)-\HY(G)=-\frac{\margin(G)}{\Delta^2+1}
\]
the \emph{additive gap}; it is positive precisely for counterexamples.

\begin{proposition}[Deficit reformulation]\label{prop:deficit}
For every graph $G$,
\begin{equation}\label{eq:margin-packing}
  \margin(G)
  =(\Delta^2+1)\rho(G)-\bigl((\Delta+1)n-2m\bigr).
\end{equation}
In particular, the conjectured inequality~\eqref{eq:HY} is equivalent to
\begin{equation}\label{eq:deficit}
  \sum_{v\in V(G)}\bigl(\Delta+1-d(v)\bigr)
  \le (\Delta^2+1)\rho(G).
\end{equation}
\end{proposition}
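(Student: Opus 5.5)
The plan is a direct algebraic substitution using \cref{lem:complement}, followed by the handshake identity to express the deficit as a sum over vertices.

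\emph{Step 1: eliminate $\tauD$.} By \cref{lem:complement}, $\tauD(G)=n-\rho(G)$. Substituting this into the definition of $\margin(G)$ gives
\[
  \margin(G)=\Delta(\Delta-1)n+2m-(\Delta^2+1)n+(\Delta^2+1)\rho(G).
\]
Collecting the coefficients of $n$, we have $\Delta(\Delta-1)-(\Delta^2+1)=-(\Delta+1)$. This yields exactly \eqref{eq:margin-packing}.

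\emph{Step 2: rewrite the deficit.} By the handshake lemma, $2m=\sum_{v}d(v)$, so
\[
  (\Delta+1)n-2m=\sum_{v\in V(G)}\bigl(\Delta+1-d(v)\bigr).
\]
The inequality \eqref{eq:HY} is equivalent to $\margin(G)\ge0$. This holds because $\Delta^2+1>0$, so we may multiply through by it without changing the direction of the inequality. Combined with \eqref{eq:margin-packing}, the condition $\margin(G)\ge0$ is exactly \eqref{eq:deficit}.

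\emph{Degenerate case.} The only point needing care is the null graph, or an edgeless graph, where $\Delta=0$. Then $\Delta^2+1=1$, and all the identities above are polynomial in $\Delta$, so they hold verbatim. \cref{lem:complement} also holds trivially here. I therefore expect no real obstacle: the whole proof is bookkeeping, and the only "step" of substance is the coefficient computation $\Delta(\Delta-1)-(\Delta^2+1)=-(\Delta+1)$.
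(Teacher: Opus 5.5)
Your proposal is correct and follows the paper's proof: substitute $\tauD(G)=n-\rho(G)$ from \cref{lem:complement}, collect the coefficient $\Delta(\Delta-1)-(\Delta^2+1)=-(\Delta+1)$, and use the handshake lemma to write $(\Delta+1)n-2m$ as the sum of vertex deficits. Your remark that $\Delta^2+1>0$ makes \eqref{eq:HY} equivalent to $\margin(G)\ge0$ just makes explicit a step the paper leaves implicit.
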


\begin{proof}
Substitute $\tauD(G)=n-\rho(G)$ from \cref{lem:complement} into the definition of $\margin(G)$ and simplify.  The equality
$(\Delta+1)n-2m=\sum_v(\Delta+1-d(v))$ is the handshaking lemma.
\end{proof}

Thus \eqref{eq:deficit} is a budget constraint: each packing vertex supplies
$\Delta^2+1$ units of capacity, whereas a vertex $v$ demands
$\Delta+1-d(v)$ units.  Our constructions keep the number of suppliers small
while allowing the demand to accumulate.

\section{A two-parameter family of diameter-two counterexamples}

The deficit inequality \eqref{eq:deficit} suggests a two-part design: force
$\rho=1$ by making the graph diameter two, while retaining enough low-degree
vertices for the total degree deficit to exceed $\Delta^2+1$.  The family
below realizes these requirements with two independently tunable classes of
$a$--$b$ routes; the clique on $D$ closes the remaining distance-two pairs.
Fix integers $t,r\ge 2$.  Define $H_{t,r}$ on the vertex set
\[
  \{a,b\}\cup X\cup C\cup D,
\]
where $X=\{x_1,\ldots,x_t\}$, $C=\{c_1,\ldots,c_r\}$, and $D=\{d_1,\ldots,d_r\}$.  Its edge set consists of
\begin{align*}
  &ax_i,\;bx_i &&(1\le i\le t),\\
  &ac_j,\;c_jd_j,\;d_jb &&(1\le j\le r),\\
  &d_id_j &&(1\le i<j\le r).
\end{align*}
Thus $a$ and $b$ have $t$ common neighbours, there are $r$ internally disjoint $a$--$b$ paths of length three, and the vertices in $D$ induce a clique.

\begin{theorem}[Diameter-two blocks]\label{thm:block}
For all integers $t,r\ge 2$, the graph $H_{t,r}$ is connected and has
\begin{align*}
 n(H_{t,r})&=t+2r+2,&
 m(H_{t,r})&=2t+3r+\binom r2,\\
 \Delta(H_{t,r})&=t+r,&
 \rho(H_{t,r})&=1,&
 \tauD(H_{t,r})&=t+2r+1.
\end{align*}
Moreover,
\begin{equation}\label{eq:block-margin}
   \margin(H_{t,r})=-(t-1)(r-1)<0.
\end{equation}
Hence every $H_{t,r}$ is a connected counterexample to \eqref{eq:HY}.
\end{theorem}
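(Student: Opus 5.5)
The plan is to verify the counting parameters directly, then establish $\rho(H_{t,r})=1$ by showing that $H_{t,r}$ has diameter two, and finally obtain $\tauD$ and the margin from \cref{lem:complement} and \cref{prop:deficit}.

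\textbf{Order, size and degrees.} The order is immediate: $2+t+r+r$. The size is $2t$ edges through $X$, $3r$ edges on the length-three paths, and $\binom r2$ edges in the clique on $D$. Connectivity is clear because every vertex lies on an $a$--$b$ path. The degrees are
\[
d(a)=t+r,\qquad d(b)=t+r,\qquad d(x_i)=2,\qquad d(c_j)=2,\qquad d(d_j)=2+(r-1)=r+1.
\]
Since $t\ge2$, we have $t+r>r+1$, so $\Delta=t+r$.

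\textbf{Diameter two.} This is the step that needs care. I will check every pair of vertex classes and exhibit a common neighbour for each nonadjacent pair:
\begin{itemize}
\item $a$ and $b$ have common neighbour $x_1$;
\item $a$ and $d_j$ have common neighbour $c_j$;
\item $b$ and $c_j$ have common neighbour $d_j$;
\item $x_i,x_k$ have common neighbour $a$, and so do $x_i,c_j$ and $c_j,c_k$;
\item $x_i$ and $d_j$ have common neighbour $b$;
\item $c_j$ and $d_k$ with $k\ne j$ have common neighbour $d_j$, since $D$ is a clique;
\item $a$ and $x_i$, $a$ and $c_j$, $b$ and $x_i$, $b$ and $d_j$ are adjacent, and any two vertices of $D$ are adjacent.
\end{itemize}
Hence any two distinct vertices are at distance at most two. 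A packing therefore has at most one vertex, so $\rho=1$.

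\textbf{Identifying cover and margin.} By \cref{lem:complement}, $\tauD=n-1=t+2r+1$. For the margin I use \eqref{eq:margin-packing}, which requires the total deficit
\[
(\Delta+1)n-2m=(t+r+1)(t+2r+2)-4t-6r-r(r-1).
\]
Subtracting this from $(\Delta^2+1)\cdot1=(t+r)^2+1$, the expansion should reduce to $-(tr-t-r+1)=-(t-1)(r-1)$. I will verify this by expanding both sides, which is routine algebra. Since $t,r\ge2$, this is negative, so \eqref{eq:HY} fails.

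The only genuine obstacle is making the diameter-two case check exhaustive, in particular the $c_j$--$d_k$ pairs where the clique on $D$ is essential. The algebra is mechanical.
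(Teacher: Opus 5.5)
Your proposal is correct and follows the paper's proof essentially step for step: a degree count giving $\Delta=t+r$, an exhaustive common-neighbour check establishing diameter two (hence $\rho=1$ and $\tauD=n-1$), and substitution into the deficit identity \eqref{eq:margin-packing}, which does simplify to $-(t-1)(r-1)$. The only cosmetic omission is that your adjacency list leaves out the pairs $c_jd_j$; these are edges by construction, so your case check is complete.
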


\begin{proof}
The order and size follow directly from the construction.  The vertex degrees are
\[
 d(a)=d(b)=t+r,\qquad d(x_i)=d(c_j)=2,\qquad d(d_j)=r+1.
\]
Since $t\ge2$, the maximum degree is $\Delta=t+r$.

We next verify the diameter systematically.  It is enough to consider nonadjacent pairs.  The pair $a,b$ has the length-two path $a x_1 b$.  Two vertices of $X$, or two vertices of $C$, have the common neighbour $a$; vertices of $D$ are pairwise adjacent.  For $x_i\in X$, the paths $x_i a c_j$ and $x_i b d_j$ handle pairs with $C$ and $D$, respectively.  The paths $a c_j d_j$ and $b d_j c_j$ handle pairs in $\{a\}\times D$ and $\{b\}\times C$.  Finally, $c_i d_i$ is an edge, while for $i\ne j$ the path $c_i d_i d_j$ has length two because $D$ is a clique.  Every nonadjacent pair appears in this list, so the diameter is at most two.  Since $a$ and $b$ are not adjacent, the diameter is exactly two.

A diameter-two graph has no packing with two vertices, so $\rho(H_{t,r})=1$.  By \cref{lem:complement}, $\tauD(H_{t,r})=n-1=t+2r+1$.  Now apply \cref{prop:deficit} with $\Delta=t+r$:
\begin{align*}
 \margin(H_{t,r})
 &=((t+r)^2+1)-\bigl((t+r+1)(t+2r+2)-2m(H_{t,r})\bigr)\\
 &=-(t-1)(r-1).
\end{align*}
This is negative for $t,r\ge2$.
\end{proof}

The factorization in \eqref{eq:block-margin} is more informative than its
sign: the two tunable parts of the construction interact multiplicatively,
so the violation is neither tied to one degree nor caused by a numerical
accident.

\begin{corollary}\label{cor:all-degrees}
For every integer $\Delta\ge4$, there is a connected counterexample to \eqref{eq:HY} with maximum degree exactly $\Delta$.
\end{corollary}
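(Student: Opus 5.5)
The corollary follows directly from \cref{thm:block} once we choose parameters realizing a prescribed maximum degree. Given an integer $\Delta\ge4$, we take $t=\Delta-2$ and $r=2$. Then $t\ge2$ and $r\ge2$, so the hypotheses of \cref{thm:block} hold. The theorem gives $\Delta(H_{t,r})=t+r=\Delta$ exactly, and $H_{t,r}$ is connected. The margin is
\[
  \margin(H_{\Delta-2,2})=-(\Delta-3)(2-1)=-(\Delta-3),
\]
which is negative for $\Delta\ge4$. By the definition of $\margin$, a negative margin means $\tauD(H_{\Delta-2,2})>\HY(H_{\Delta-2,2})$, so \eqref{eq:HY} fails for this graph.

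Other splittings would work equally well: any $t,r\ge2$ with $t+r=\Delta$, for instance $t=2$, $r=\Delta-2$. The only point needing care is the lower limit. The smallest admissible parameters are $t=r=2$, which give $\Delta=4$. This is why the corollary starts at $\Delta=4$, and it fits with Henning and Yeo's proof of the bound for $\Delta\le3$.

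There is no real obstacle here. The one thing to check is that the maximum degree equals $t+r$ exactly and is not some other vertex's degree. The degree list in the proof of \cref{thm:block} settles this: $a$ and $b$ have degree $t+r$, the vertices of $D$ have degree $r+1<t+r$ because $t\ge2$, and all remaining vertices have degree $2$.
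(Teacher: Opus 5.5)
Your proposal is correct and matches the paper's proof: choose $t,r\ge2$ with $t+r=\Delta$ and apply \cref{thm:block}. The paper's example split is $t=2$, $r=\Delta-2$ instead of your $t=\Delta-2$, $r=2$, and either choice works.
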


\begin{proof}
Choose any $t,r\ge2$ with $t+r=\Delta$, for example $t=2$ and $r=\Delta-2$, and apply \cref{thm:block}.
\end{proof}

\begin{figure}[!htbp]
\centering
\begin{tikzpicture}[
  scale=0.9,
  every node/.style={circle,draw,inner sep=1.8pt,font=\small},
  every edge/.style={draw}
]
  \node (a) at (-3,0) {$a$};
  \node (b) at (3,0) {$b$};
  \node (x1) at (0,0.8) {$x_1$};
  \node (x2) at (0,-0.8) {$x_2$};
  \node (c1) at (-1.8,2) {$c_1$};
  \node (c2) at (-1.8,-2) {$c_2$};
  \node (d1) at (1.8,2) {$d_1$};
  \node (d2) at (1.8,-2) {$d_2$};
  \begin{pgfonlayer}{background}
    \node[shape=rectangle,draw=gray!65,dashed,rounded corners=4pt,fill=gray!7,fit=(c1)(c2),inner sep=8pt] {};
    \node[shape=rectangle,draw=gray!65,dashed,rounded corners=4pt,fill=gray!7,fit=(x1)(x2),inner sep=8pt] {};
    \node[shape=rectangle,draw=gray!65,dashed,rounded corners=4pt,fill=gray!7,fit=(d1)(d2),inner sep=8pt] {};
  \end{pgfonlayer}
  \node[shape=rectangle,draw=none,fill=white,inner sep=1pt,font=\scriptsize\itshape] at (-1.8,2.55) {$C$};
  \node[shape=rectangle,draw=none,fill=white,inner sep=1pt,font=\scriptsize\itshape] at (0,1.35) {$X$};
  \node[shape=rectangle,draw=none,fill=white,inner sep=1pt,font=\scriptsize\itshape] at (1.8,2.55) {$D$};
  \draw (a)--(x1)--(b) (a)--(x2)--(b);
  \draw (a)--(c1)--(d1)--(b);
  \draw (a)--(c2)--(d2)--(b);
  \draw (d1)--(d2);
\end{tikzpicture}
\caption{The smallest block $H_{2,2}$.  The common neighbours in $X$ keep
$a$ and $b$ at distance two, the clique on $D$ closes the remaining two-step
routes, and the degree-two vertices in $X\cup C$ supply degree deficit.  In
general there are $t$ common neighbours $x_i$ and $r$ paths $a c_j d_j b$.}
\label{fig:h22}
\end{figure}

The rational amount by which $H_{t,r}$ exceeds the conjectured bound is
\begin{equation}\label{eq:block-gap}
  \tauD(H_{t,r})-\HY(H_{t,r})
  =\frac{(t-1)(r-1)}{(t+r)^2+1}.
\end{equation}
For the diagonal family $t=r=s$, this quantity tends to $1/4$.  A single block may therefore resemble a rounding obstruction.  The next construction shows that repeated total deficit, rather than rounding, is the structural source of failure.

\input{sections/loaded_trees}

\input{sections/exact_limit}

\section{The smallest witness: an exact computational check}
\label{sec:smallest-witness}

We now turn to an independent finite question: how small can any
counterexample be?  Computation first located the pattern, but the family proof
above explains it; exhaustive search is used here only to certify the left
endpoint at order eight.  The graph $H_{2,2}$ has order eight, size eleven,
and degree sequence
$(4,4,3,3,2,2,2,2)$; an isomorphic labelled representative, written in the
standard \texttt{graph6} format, has encoding
\[
  \texttt{GXFGXC}.
\]
An explicit relabeling from the construction to vertices $0,\ldots,7$ of this
representative is
\begin{align*}
a&\mapsto2,& b&\mapsto5,& x_1&\mapsto0,& x_2&\mapsto1,\\
c_1&\mapsto3,& c_2&\mapsto7,& d_1&\mapsto4,& d_2&\mapsto6.
\end{align*}
It has $\rho=1$ and $\tauD=7$, whereas \eqref{eq:HY} would require
\[
  17\cdot7\le4\cdot3\cdot8+2\cdot11,
\]
that is, $119\le118$.

An isomorphic copy of $H_{2,2}$ also appears as a smallest graph of fault
cost~$3$ in Goedgebeur et al.~\cite[Proposition~4 and
Figure~14]{GoedgebeurEtAl2026} and as House of Graphs record
53055~\cite{HouseOfGraphs53055}.  In the House of Graphs labelling, an explicit
isomorphism is
\[
 (a,b,x_1,x_2,c_1,c_2,d_1,d_2)
 \longmapsto(6,7,0,1,2,3,4,5).
\]
That work concerns minimum-leaf spanning structures and does not address
identifying vertex covers, packing numbers, or \eqref{eq:HY}.  The new content
here is the identification and proof of this graph as an order-minimal
counterexample to \eqref{eq:HY}, together with the family and fixed-degree
extensions above.

\begin{proposition}[Computer-assisted]\label{prop:minimal}
No graph of order at most seven is a counterexample to \eqref{eq:HY}.  Consequently, $H_{2,2}$ has minimum possible order among all counterexamples.
\end{proposition}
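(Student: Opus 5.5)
The plan is an exhaustive computation, organised so that it is small and easy to audit.

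By \cref{prop:deficit}, a graph $G$ is a counterexample exactly when
\[
\sum_v\bigl(\Delta+1-d(v)\bigr)>(\Delta^2+1)\rho(G).
\]
We first reduce to connected graphs. Suppose $G$ has components $G_1,\dots,G_k$ and $\Delta=\Delta(G)$. Then $\rho$ is additive over the components, and so is the left-hand side. Each component $G_i$ with $\Delta(G_i)<\Delta$ has $\rho(G_i)\ge 1$ and left-hand side at most $(\Delta+1)|V(G_i)|\le(\Delta+1)(\Delta+1)$, which is not sharp enough in general. The cleaner route is simply to include disconnected graphs in the enumeration. The counts are small anyway: there are $1044$ graphs on seven vertices and $1252$ graphs of order at most seven in total. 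So we drop the connectivity reduction and enumerate all graphs up to isomorphism with \texttt{geng} from nauty, for orders $1$ through $7$.

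For each graph we compute $n$, $m$ and $\Delta$. We compute $\rho(G)$ directly from the definition, as the maximum size of a vertex set whose members are pairwise at distance at least three. Concretely, this is the independence number of the square $G^2$, found by brute force over all $2^n\le128$ subsets using BFS distances. We then set $\tauD=n-\rho$ by \cref{lem:complement}. To match the paper's claim of a check based on the definition, we would also verify $\tauD$ independently, by brute force over the subsets $T$ and testing the identifying-cover condition on edge traces. The test itself is $\margin(G)\ge0$ in exact integer arithmetic. Graphs with $\Delta=0$ give $\margin=n\cdot0+0-\tauD=0$, since then $\tauD=0$, so they pass.

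Some quick analytic pruning serves as a sanity check. If $\Delta\le3$ or $G$ is regular, the result of Henning and Yeo~\cite{HenningYeo2012} already applies. If $\rho\ge2$, the deficit is at most $(\Delta+1)n-2m<(\Delta+1)n\le 8\cdot 7=56$. A counterexample would then need $2(\Delta^2+1)<(\Delta+1)n$, which for $n\le7$ forces $\Delta\le2$, and that case is already covered. So only diameter-at-most-two graphs, meaning $\rho=1$, with $4\le\Delta\le6$ remain. For these the condition becomes $(\Delta+1)n-2m>\Delta^2+1$.

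This reduced condition can nearly be settled by hand. For $n=7$ and $\Delta=6$ it reads $49-2m>37$, so $m<6$; but a vertex of degree six already contributes six edges. For $\Delta=5$, $n=7$ it needs $42-2m>26$, so $m\le7$, and a diameter-two graph of this kind would have to be checked. For $\Delta=4$ it needs $35-2m>17$, so $m\le8$, with a vertex of degree four and diameter two. These residual cases are exactly what the computer enumerates. The main obstacle is making this verification trustworthy rather than merely fast. I would therefore cross-check the packing computation against the independent $\tauD$ brute force on every graph. As a positive control, I would confirm that the same code flags \texttt{GXFGXC} at order eight with margin $-1$.

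For the consequence: \cref{thm:block} shows that $H_{2,2}$ is a counterexample of order eight. Combined with the enumeration, this gives that eight is the minimum order.
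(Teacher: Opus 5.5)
Your proposal is essentially the paper's own verification protocol. Both run an exhaustive enumeration of every graph of order at most seven; the paper draws them from the Read--Wilson atlas and McKay's \texttt{graph6} files and uses three independent checkers, where you use \texttt{geng}. Both compute $\tauD$ by brute force from the definition, cross-check it against $\rho$ and $\tauD+\rho=n$, test the margin in exact integer arithmetic, use $H_{2,2}$ as a positive control, and then appeal to \cref{thm:block} for minimality.

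There are two harmless blemishes. First, the null graph (order zero, margin zero) should be included in the enumeration. Second, in your optional pruning, the inequality $2(\Delta^2+1)<(\Delta+1)n$ with $n\le7$ permits $\Delta\le4$, not only $\Delta\le2$. The case $\Delta=4$, $\rho\ge2$ therefore needs the extra observation $m\ge\Delta$, which gives deficit at most $35-8=27<34$. Neither point affects the computational proof.
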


\begin{proof}[Verification protocol]
We checked the proposition by exhaustive subset enumeration on two separately
sourced catalogues.  The first consists of the 1,253 nonisomorphic graphs of
order at most seven in the Read--Wilson atlas~\cite{ReadWilson1998}, accessed
through NetworkX 3.5~\cite{HagbergEtAl2008,NetworkXAtlas35}.  For the second,
we used Brendan McKay's six order-specific \texttt{graph6} files for orders two
through seven, comprising 1,251 records, and added the unique graphs of orders
zero and one locally~\cite{McKayGraphData}.  Record counts and input hashes are
fixed in the reproducibility archive.

For every graph, one checker enumerated all vertex subsets and tested the
identifying-cover definition directly.  A separate enumeration computed the
packing number and checked $\tauD+\rho=n$.  A third implementation, using only
the Python standard library, decoded the \texttt{graph6} records and repeated
both calculations.  The checkers returned no negative margin on
the 1,253 graphs and all returned margin $-1$ on $H_{2,2}$.  They also verify
the explicit relabeling from $H_{2,2}$ to the \texttt{GXFGXC} representative
given above.  All parameter and bound comparisons use integer arithmetic.
\end{proof}

The accompanying reproducibility archive (Supplementary File S1, version
0.3) is supplied as
\begin{center}
\small\ttfamily
identifying-vertex-cover-\\[-1pt]
supplement-v0.3-reproducible.zip
\end{center}
It contains the code, input digests, portable outputs, dependency
specification, regression tests, and a SHA-256 manifest.  Its top-level
SHA-256 digest is
\begin{center}
\scriptsize\ttfamily
488a630ec7492857f89283d7e34ccfc1cf56cc1bb9f8c48db8b5f57114f3bbc9
\end{center}
The computational proposition is not needed for
\cref{thm:block,thm:all-fixed-degree-unbounded,thm:exact-c-delta-limit}; it
only proves order minimality.  We make no claim that $H_{2,2}$ is the unique
counterexample of order eight.

\section{Conclusion}

The Henning--Yeo inequality fails at three distinct scales.  The
diameter-two blocks $H_{t,r}$ give connected counterexamples for every maximum
degree at least four.  Paths of $H_{t,r}$ blocks, together with loaded trees
of Petersen fibres at the two critical degrees, make the additive violation
unbounded at every fixed maximum degree $\Delta\ge4$.  Thus neither rounding
nor any finite degree-dependent additive correction repairs the bound.
Across all three scales, the same mechanism is at work: diameter-two fibres
cap packing capacity at one vertex, whereas degree deficit accumulates almost
linearly under load-controlled gluing.

At the normalized scale, rooted rook-graph fibres provide
$c_\Delta\ge\Delta^{-1}-6\Delta^{-4/3}$, while the square-graph packing
argument supplies a matching first-order upper bound.  Consequently,
\[
  \lim_{\Delta\to\infty}\Delta c_\Delta=1.
\]
This determines the first-order severity of the failure, not merely its
asymptotic order.

The exact values of $c_\Delta$ at fixed degree and the global second-order
term remain open.  It would also be useful to characterize extremal or
asymptotically extremal connected graphs, and to determine which structural
features beyond the loaded-fibre mechanism can approach the first-order
constant.

\section*{AI-assistance disclosure}
Generative-AI tools were used extensively for exploratory search, proposing
candidate constructions and proof arguments, code development, assistance
with literature searches, drafting, and language editing.  No AI system is
an author.  The human author independently reconstructed the arguments,
checked all mathematical claims, computations, citations, and wording, and
accepts sole responsibility for the manuscript.

\bibliographystyle{amsplain}
\bibliography{refs}

\end{document}

%% file: sections/loaded_trees.tex
\section{Loaded trees and unbounded fixed-degree violation}
\label{sec:loaded-trees}

A single block proves failure but does not yet show that its deficit can
accumulate while the maximum degree remains fixed.  We therefore glue
diameter-two fibres along a tree: the quotient tree makes the bridge
contribution exact, port loads record the increase in maximum degree, and
unused vertices provide one packing choice per fibre.  Allowing bridge
incidences at one fibre to use either distinct vertices or the same vertex
will distinguish the two critical degrees.

\subsection{An exact transfer theorem}

\begin{definition}[Loaded tree of fibres]\label{def:loaded-tree}
Let $T$ be a tree on $[k]$, where $k\ge2$, and let
$R_1,\ldots,R_k$ be vertex-disjoint copies of a graph $R$.  For every
incidence $(i,e)$ with $i\in e\in E(T)$, choose a vertex
$p_{i,e}\in V(R_i)$, called its \emph{port}.  The \emph{load} of
$v\in V(R_i)$ is
\[
  \lambda_i(v)=|\{e\in E(T):i\in e\text{ and }p_{i,e}=v\}|.
\]
The maximum port load is $L=\max_{i,v}\lambda_i(v)$.  A vertex
$s_i\in V(R_i)$ with $\lambda_i(s_i)=0$ is an \emph{unused selector}.
For every edge $e=ij$ of $T$, add the edge $p_{i,e}p_{j,e}$; the resulting
graph is denoted by $G(T,R,p)$.
\end{definition}

The next theorem records the exact effect of this operation on every
parameter in the Henning--Yeo bound.  In particular, $L$ is the actual
maximum load, not merely an upper bound.

\begin{theorem}[Loaded-tree transfer]\label{thm:loaded-tree}
Let $R$ be a connected $r$-regular graph of order $q$ and diameter at most
two.  Let $G=G(T,R,p)$ be a loaded tree of $k\ge2$ copies of $R$.  Suppose
that the maximum port load is exactly $L$ and that every copy contains an
unused selector.  Then $G$ is connected and
\begin{align*}
 |V(G)|&=qk,&
 |E(G)|&=\frac{rqk}{2}+k-1,&
 \Delta(G)&=r+L,\\
 \rho(G)&=k,&
 \tauD(G)&=(q-1)k.
\end{align*}
Moreover,
\begin{equation}\label{eq:loaded-tree-margin}
 \margin(G)
 =k\bigl((r+L)^2+3-(L+1)q\bigr)-2,
\end{equation}
and hence
\begin{equation}\label{eq:loaded-tree-gap}
 \tauD(G)-\HY(G)
 =\frac{k\bigl((L+1)q-(r+L)^2-3\bigr)+2}
 {(r+L)^2+1}.
\end{equation}
\end{theorem}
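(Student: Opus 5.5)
Most parameters are read off directly; the one substantive step is $\rho(G)=k$, and the margin then comes from \cref{prop:deficit}.

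\emph{Routine parameters.} Connectivity holds because each fibre $R_i$ is connected and $T$ is connected, with one bridge edge $p_{i,e}p_{j,e}$ for each tree edge. The order is $qk$. The size is $k\cdot rq/2$ fibre edges plus $k-1$ bridge edges. The bridge edges are distinct from fibre edges (they join different fibres) and from each other (distinct tree edges give distinct unordered pairs of fibres). A vertex $v\in V(R_i)$ has degree $r+\lambda_i(v)$, because each tree edge at $i$ whose port is $v$ contributes exactly one new neighbour. Hence $\Delta(G)=r+L$, since the maximum load is attained.

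\emph{Packing number.} For the lower bound, the unused selectors $s_1,\ldots,s_k$ form a packing. A path from $s_i$ to $s_j$ with $i\ne j$ must leave $R_i$ along a bridge edge, whose endpoint in $R_i$ is a port distinct from $s_i$. So it needs at least one edge inside $R_i$ to reach a port, one bridge edge, and at least one edge inside the last fibre to reach $s_j$, which is also not a port. The distance is therefore at least $3$. For the upper bound, two vertices of the same fibre are at distance at most $2$ within $R_i$, because $R$ has diameter at most two. Hence a packing meets each fibre at most once, and $\rho(G)\le k$. \Cref{lem:complement} then gives $\tauD(G)=(q-1)k$.

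\emph{Margin.} Apply \cref{prop:deficit} with $\Delta=r+L$:
\[
\margin(G)=((r+L)^2+1)k-\bigl((r+L+1)qk-rqk-2(k-1)\bigr).
\]
The bracket equals $(L+1)qk-2k+2$, so
\[
\margin(G)=k\bigl((r+L)^2+3-(L+1)q\bigr)-2,
\]
which is \eqref{eq:loaded-tree-margin}. Dividing $-\margin(G)$ by $(r+L)^2+1$ gives \eqref{eq:loaded-tree-gap}.

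The only step needing care is the selector distance bound. It relies on the selectors having load $0$, so that neither endpoint is itself a bridge endpoint, and on the fact that any path between different fibres must traverse a bridge. The remaining steps are bookkeeping.
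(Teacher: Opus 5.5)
Your proposal is correct and follows the paper's proof essentially step by step: the parameters are read off from the construction with degrees $r+\lambda_i(v)$, $\rho(G)=k$ comes from the diameter-two upper bound together with the same three-edge argument for unused selectors, and the margin follows by substituting into the deficit identity of \cref{prop:deficit}. Your extra checks, that bridge edges are distinct and that selectors are never bridge endpoints, simply make explicit points the paper leaves implicit.
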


\begin{proof}
The added edges follow the tree $T$, so $G$ is simple and connected.  The
order formula is immediate, and the $k-1$ tree edges give the stated size.
Every $v\in V(R_i)$ has degree $r+\lambda_i(v)$.  Since the maximum load is
attained, the maximum degree is exactly $r+L$.

Because each $R_i$ has diameter at most two, a packing in $G$ contains at
most one vertex from each copy.  Conversely, choose an unused selector
$s_i$ in every copy.  A path from $s_i$ to $s_j$, where $i\ne j$, must use
an edge inside $R_i$ before its first bridge, at least one bridge, and an
edge inside $R_j$ after its last bridge.  Thus its length is at least three,
so $\{s_1,\ldots,s_k\}$ is a packing.  Therefore $\rho(G)=k$, and
\cref{lem:complement} gives $\tauD(G)=qk-k$.

Put $D=r+L$.  Substitution in \cref{eq:margin-packing} yields
\begin{align*}
 \margin(G)
 &=(D^2+1)k-
   \left((D+1)qk-\left(rqk+2(k-1)\right)\right)\\
 &=k\bigl(D^2+3-(L+1)q\bigr)-2.
\end{align*}
This is \eqref{eq:loaded-tree-margin}; negating it and dividing by $D^2+1$
gives \eqref{eq:loaded-tree-gap}.
\end{proof}

\subsection{Fixed degrees}

The transfer formula is also a design criterion.  For any scalable family
satisfying \cref{thm:loaded-tree}, the additive gap has positive linear slope
precisely when
\[
   (L+1)q>(r+L)^2+3.
\]
The Petersen graph $P$ has $(q,r)=(10,3)$ and diameter two.  It clears this
threshold for both $L=1$ and $L=2$, with excess $1$ and $2$, respectively.
Thus distinct ports give maximum degree four, whereas reusing one root gives
maximum degree five.

\Needspace{12\baselineskip}
\begin{corollary}[The critical degrees]\label{cor:petersen-critical}
The following connected Petersen-fibre families have unbounded additive
violation.
\begin{enumerate}[label=\textup{(\alph*)},leftmargin=2.2em]
 \item For every $k\ge2$, join $k$ Petersen copies over a path, assigning
 the two incidences of each internal copy to distinct ports.  The resulting
 graph $P_{4,k}$ satisfies
 \[
  (n,m,\Delta,\rho,\tauD)=(10k,16k-1,4,k,9k)
 \]
 and
 \[
  \margin(P_{4,k})=-k-2,
  \qquad
  \tauD(P_{4,k})-\HY(P_{4,k})=\frac{k+2}{17}.
 \]
 \item For every $k\ge3$, join $k$ Petersen copies over a path, using one
 fixed root for every incidence in a copy.  The resulting graph $P_{5,k}$
 satisfies
 \[
  (n,m,\Delta,\rho,\tauD)=(10k,16k-1,5,k,9k)
 \]
 and
 \[
  \margin(P_{5,k})=-2k-2,
  \qquad
  \tauD(P_{5,k})-\HY(P_{5,k})=\frac{k+1}{13}.
 \]
\end{enumerate}
\end{corollary}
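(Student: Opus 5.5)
The plan is to obtain both families as direct instances of \cref{thm:loaded-tree} with fibre $R=P$, the Petersen graph, and quotient tree $T$ the path on $[k]$. First I record the hypotheses on the fibre. The Petersen graph is connected and $3$-regular of order $10$, and it has diameter two, so $(q,r)=(10,3)$. Next, in a path every copy is incident with at most two tree edges. Hence at most two of its ten vertices are ports, and every copy contains an unused selector.

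The only hypothesis needing care is that the maximum port load is \emph{exactly} $L$, since \cref{thm:loaded-tree} uses the attained maximum.
\begin{itemize}[leftmargin=2.2em]
\item In (a), each internal copy sends its two incidences to distinct ports. End copies have a single incidence. So every load is $0$ or $1$, and because $k\ge2$ gives at least one edge, $L=1$.
\item In (b), each copy uses one fixed root for all its incidences, so an internal copy has a root of load $2$. Internal copies exist precisely when $k\ge3$, which gives $L=2$. This is the reason for the restriction $k\ge3$: for $k=2$ the construction coincides with (a) and has $\Delta=4$.
\end{itemize}

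With these checks done, \cref{thm:loaded-tree} gives $n=10k$ and $m=15k+k-1=16k-1$, together with $\rho=k$, $\tauD=9k$ and $\Delta=3+L$, that is, $4$ and $5$ respectively. Substituting into \eqref{eq:loaded-tree-margin} gives the margins. For $L=1$ it is $k(16+3-20)-2=-k-2$. For $L=2$ it is $k(25+3-30)-2=-2k-2$. Dividing the negated margins by $\Delta^2+1$, namely $17$ and $26$, gives the gaps $(k+2)/17$ and $(2k+2)/26=(k+1)/13$, as in \eqref{eq:loaded-tree-gap}.

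Both gaps grow linearly in $k$, so the additive violation is unbounded at maximum degree $4$ and at maximum degree $5$. Connectedness is part of the conclusion of \cref{thm:loaded-tree}. No step is a serious obstacle. The one point requiring attention is the exactness of $L$, that is, the role of the lower bounds $k\ge2$ and $k\ge3$ described above.
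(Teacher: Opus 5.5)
Your proposal is correct and matches the paper's proof: both apply \cref{thm:loaded-tree} with $(r,q)=(3,10)$ and $L=1$ or $L=2$. Both also check that the maximum load is actually attained (because $k\ge2$ in (a), and because an internal copy exists when $k\ge3$ in (b)) and that every copy has an unused selector; your explicit substitution into \eqref{eq:loaded-tree-margin} and \eqref{eq:loaded-tree-gap} only writes out arithmetic the paper leaves implicit.
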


\begin{proof}
In part~\textup{(a)}, the actual maximum load is one; an internal copy uses
two distinct ports, and a third vertex can be chosen as its selector.  Apply
\cref{thm:loaded-tree} with $(r,q,L)=(3,10,1)$.

In part~\textup{(b)}, an internal copy exists because $k\ge3$, and its root
has load two; any other vertex is an unused selector.  Apply
\cref{thm:loaded-tree} with $(r,q,L)=(3,10,2)$.
\end{proof}

\begin{remark}[Provenance]
Bridge gluing already occurs in the examples of Henning and
Yeo~\cite{HenningYeo2012}.  When all incidences at a fibre use one prescribed
root, this is the classical rooted product~\cite{GodsilMcKay1978}, whose
packing number is covered by a general product formula~\cite{MojdehEtAl2020}.
The path-union framework is due to Shee and Ho~\cite{SheeHo1996}, and Vaidya
and Kanani later used the repeated-root Petersen path
\cite[Theorem~2.5, p.~68]{VaidyaKanani2011}.  Our use of these classical
operations is to determine their exact Henning--Yeo margin and its consequences.
\end{remark}

For higher maximum degree, no new fibre is needed.  Each copy of
$H_{3,\Delta-3}$ contributes cleared margin $-2(\Delta-4)$, while every new
bridge contributes $2$; the net linear coefficient is therefore
$-2(\Delta-5)$.  This explains why the original blocks take over at
$\Delta=6$.  For $\Delta\ge6$ and $k\ge1$, let $C_{\Delta,k}$ be obtained
from $k$ copies of $H_{3,\Delta-3}$ by joining consecutive vertices
$c_1^{(i)}c_1^{(i+1)}$ along a path, as in the block-chain construction.

\begin{proposition}\label{prop:large-degree-chains}
For every $\Delta\ge6$ and $k\ge1$, the graph $C_{\Delta,k}$ is connected,
has maximum degree $\Delta$, and satisfies
\[
 \rho(C_{\Delta,k})=k,
 \qquad
 \margin(C_{\Delta,k})=-2k(\Delta-5)-2.
\]
Consequently,
\begin{equation}\label{eq:large-degree-gap}
 \tauD(C_{\Delta,k})-\HY(C_{\Delta,k})
 =\frac{2k(\Delta-5)+2}{\Delta^2+1}.
\end{equation}
\end{proposition}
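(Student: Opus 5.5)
The plan is to compute every parameter of $C_{\Delta,k}$ directly and compare it with the disjoint union of $k$ blocks. The chain is not built from a regular fibre, so \cref{thm:loaded-tree} does not apply verbatim. Instead we use \cref{thm:block} blockwise and track what the $k-1$ bridges change.

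Set $t=3$ and $r=\Delta-3\ge3$, so that each copy $H^{(i)}$ of $H_{t,r}$ has maximum degree $t+r=\Delta$.

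\textbf{Connectivity and degree.} Connectivity is immediate: each copy is connected by \cref{thm:block}, and the bridges $c_1^{(i)}c_1^{(i+1)}$ link consecutive copies along a path. For the degrees, the bridges only touch the vertices $c_1^{(i)}$. Each such vertex has degree $2$ inside its copy, so its degree rises to at most $4\le\Delta$. Every other vertex keeps its degree from \cref{thm:block}. Since $a^{(1)}$ still has degree $t+r=\Delta$, the maximum degree is exactly $\Delta$.

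\textbf{Packing number.} For the upper bound, each copy has diameter two, so a packing meets each copy in at most one vertex; hence $\rho\le k$. For the lower bound, I will take the selectors $x_1^{(i)}$. Every path leaving copy $i$ must pass through $c_1^{(i)}$ and then use a bridge. Since $x_1^{(i)}$ is not adjacent to $c_1^{(i)}$ (its neighbours are $a^{(i)},b^{(i)}$), reaching $c_1^{(i)}$ already takes at least two steps. Adding at least one bridge, any path between $x_1^{(i)}$ and $x_1^{(j)}$ with $i\ne j$ has length at least three. Hence $\{x_1^{(1)},\dots,x_1^{(k)}\}$ is a packing and $\rho=k$. \cref{lem:complement} then gives $\tauD$.

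\textbf{Margin.} I will use \eqref{eq:margin-packing}, which is additive over disjoint unions as long as the value of $\Delta$ in the formula is the same. Let $U$ denote the disjoint union of the $k$ blocks. It has maximum degree $\Delta$, satisfies $\rho(U)=k$, and has margin
\[
k\cdot\bigl(-(t-1)(r-1)\bigr)=-2k(\Delta-4).
\]
Passing from $U$ to $C_{\Delta,k}$ leaves $n$, $\Delta$ and $\rho$ unchanged and raises $2m$ by $2(k-1)$. Therefore the margin increases by $2(k-1)$, giving
\[
-2k(\Delta-4)+2k-2=-2k(\Delta-5)-2.
\]
Dividing by $\Delta^2+1$ gives \eqref{eq:large-degree-gap}. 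The case $k=1$ is simply \cref{thm:block}.

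The only step needing care is the lower bound $\rho\ge k$, that is, choosing selectors at distance at least two from the port. Everything else is bookkeeping.
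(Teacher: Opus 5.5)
Your proposal is correct and follows essentially the same route as the paper: a blockwise degree check, at most one packing vertex per diameter-two block, an explicit packing with one selector per block, and the sum of the $k$ block margins plus $2$ for each of the $k-1$ bridges. The only difference is cosmetic. You choose the selector $x_1^{(i)}$, while the paper chooses $a^{(i)}$. In fact any non-port vertex works, even one adjacent to the port such as $a^{(i)}$, so your ``distance at least two from the port'' condition is stronger than needed.
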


\begin{proof}
Every bridge endpoint has degree at most four, whereas the vertices $a$ and
$b$ in each block retain degree $3+(\Delta-3)=\Delta$.  Each block has
diameter two by \cref{thm:block}, so a packing uses at most one vertex per
block.  The vertices corresponding to $a$ in the $k$ copies are pairwise at
distance $|i-j|+2$, and hence form a packing of size $k$.  Finally, the sum
of the $k$ block margins from \cref{eq:block-margin}, together with the
$k-1$ bridges, gives
\[
 -2k(\Delta-4)+2(k-1)=-2k(\Delta-5)-2.
\]
Equation~\eqref{eq:large-degree-gap} follows from the definition of the
additive gap.
\end{proof}

To state the fixed-degree conclusion, define the extended-real quantity
\[
 A_\Delta=\sup\{\tauD(G)-\HY(G):
   G\text{ is connected and }\Delta(G)=\Delta\}.
\]

\begin{theorem}[Unbounded failure at every fixed degree]
\label{thm:all-fixed-degree-unbounded}
For every integer $\Delta\ge4$,
\[
  A_\Delta=+\infty.
\]
\end{theorem}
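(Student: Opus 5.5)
The plan is to split into three cases according to $\Delta$ and, in each case, exhibit a connected family of graphs of maximum degree exactly $\Delta$ whose additive gap $\tauD(G)-\HY(G)$ grows without bound. Since $A_\Delta$ is a supremum over exactly such graphs, it is then at least every member of an unbounded sequence, and so equals $+\infty$. All the needed families and their exact gaps have already been computed, so the proof is assembly.

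For $\Delta=4$ I will use the Petersen-fibre paths $P_{4,k}$ of \cref{cor:petersen-critical}(a). For every $k\ge2$ they are connected, have maximum degree exactly $4$, and have gap $(k+2)/17\to\infty$. For $\Delta=5$ I will use $P_{5,k}$ from \cref{cor:petersen-critical}(b), for $k\ge3$. These are connected with maximum degree exactly $5$ and gap $(k+1)/13\to\infty$. For $\Delta\ge6$ I will use the block chains $C_{\Delta,k}$ of \cref{prop:large-degree-chains}. These are connected with maximum degree $\Delta$, and by \eqref{eq:large-degree-gap} their gap is $\bigl(2k(\Delta-5)+2\bigr)/(\Delta^2+1)$. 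Because $\Delta-5\ge1$, this is linear in $k$ with positive slope, so it also tends to infinity as $k\to\infty$ with $\Delta$ fixed.

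The only point requiring care is that $A_\Delta$ is defined with maximum degree \emph{exactly} $\Delta$. The cited results already guarantee this. In \cref{thm:loaded-tree} the maximum load $L$ is assumed to be attained, and \cref{cor:petersen-critical} checks that this holds for $L=1$ and $L=2$. In \cref{prop:large-degree-chains}, the vertices $a,b$ of each block keep degree $\Delta$, while every bridge endpoint has degree at most $4<\Delta$. Hence no real obstacle remains. The main step is simply checking that the three ranges $\Delta=4$, $\Delta=5$ and $\Delta\ge6$ together cover all $\Delta\ge4$, and that each slope is strictly positive.
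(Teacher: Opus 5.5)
Your proposal is correct and follows the paper's proof: the Petersen-fibre paths $P_{4,k}$ and $P_{5,k}$ handle $\Delta=4$ and $\Delta=5$, the block chains $C_{\Delta,k}$ handle $\Delta\ge6$, and in each case the additive gap grows linearly in $k$. Your check that the maximum degree is exactly $\Delta$ only makes explicit what the cited results already guarantee.
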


\begin{proof}
The two parts of \cref{cor:petersen-critical} give the assertion for
$\Delta=4$ and $\Delta=5$, respectively.  For every $\Delta\ge6$,
\eqref{eq:large-degree-gap} tends to infinity with $k$.
\end{proof}

\begin{corollary}\label{cor:no-additive-correction}
For no $\Delta\ge4$ is there a finite constant $C_\Delta$ such that
\[
  \tauD(G)\le\HY(G)+C_\Delta
\]
for every connected graph $G$ of maximum degree exactly $\Delta$.  Hence no
universal fixed additive correction works, and replacing $\HY(G)$ by
$\lceil\HY(G)\rceil$ does not repair the conjecture.
\end{corollary}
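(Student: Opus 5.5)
This follows directly from \cref{thm:all-fixed-degree-unbounded}; the argument is by contradiction. Fix $\Delta\ge4$ and suppose a finite constant $C_\Delta$ satisfies $\tauD(G)\le\HY(G)+C_\Delta$ for every connected graph $G$ with $\Delta(G)=\Delta$. Then every such $G$ has additive gap $\tauD(G)-\HY(G)\le C_\Delta$. Taking the supremum gives $A_\Delta\le C_\Delta<+\infty$, which contradicts $A_\Delta=+\infty$.

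To make the contradiction concrete, I will point to the explicit witnesses. For $\Delta=4$ take $P_{4,k}$, whose gap is $(k+2)/17$. For $\Delta=5$ take $P_{5,k}$, whose gap is $(k+1)/13$; both come from \cref{cor:petersen-critical}. For $\Delta\ge6$ take $C_{\Delta,k}$, whose gap is given by \eqref{eq:large-degree-gap}. In each case the gap exceeds $C_\Delta$ once $k$ is large enough. All of these graphs are connected and have maximum degree exactly $\Delta$, so each is an admissible test graph.

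The claim that no universal fixed additive correction works is then immediate. A single constant $C$ valid for all graphs would, in particular, serve as $C_\Delta$ for $\Delta=4$.

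For the ceiling statement, I will use $\lceil x\rceil<x+1$. If $\tauD(G)\le\lceil\HY(G)\rceil$ held for all connected $G$ of maximum degree $\Delta$, then $\tauD(G)\le\HY(G)+1$ would hold for all of them. That is exactly the excluded case $C_\Delta=1$. Alternatively, $P_{4,k}$ with $k\ge16$ has gap at least $18/17>1$, so it already violates the ceiling version.

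None of these steps is really an obstacle. The only point to check is that every witness family has maximum degree exactly $\Delta$ and is connected, and the cited results state both facts explicitly.
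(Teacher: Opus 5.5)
Your proposal is correct and follows the paper's argument. You read the first two assertions off from $A_\Delta=+\infty$ in \cref{thm:all-fixed-degree-unbounded}, and you settle the ceiling claim with a witness whose additive gap exceeds one, as the paper does via $\tauD(G)>\HY(G)+1\ge\lceil\HY(G)\rceil$; your choice of $P_{4,k}$ with $k\ge16$ is a valid concrete instance.
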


\begin{proof}
The first two assertions are immediate from
\cref{thm:all-fixed-degree-unbounded}.  For the last, choose a member of any
one fixed-degree family whose additive gap is greater than one; then
$\tauD(G)>\HY(G)+1\ge\lceil\HY(G)\rceil$.
\end{proof}

%% file: sections/exact_limit.tex
\section{The extremal normalized gap}\label{sec:extremal-gap}

For an integer $\Delta\ge 4$, define
\begin{equation}\label{eq:c-delta-definition}
 c_\Delta=
 \sup\left\{
   \frac{\tauD(G)-\HY(G)}{|V(G)|}:
   G\text{ is connected and }\Delta(G)=\Delta
 \right\}.
\end{equation}
The maximum-degree condition in this definition is an equality.  We first
record a universal upper bound, and then construct graphs whose normalized
gaps have the same first-order asymptotic behaviour.

Henning and Yeo's general estimate~\cite[Theorem~7]{HenningYeo2012} already
implies
\[
  \limsup_{\Delta\to\infty}\Delta c_\Delta\le1.
\]
We prove the following stronger finite-degree bound because it gives a
quantitative refinement.

\subsection{A universal upper bound}

\begin{proposition}[Square-graph upper bound]\label{prop:c-delta-upper}
For every integer $\Delta\ge4$,
\begin{equation}\label{eq:c-delta-upper}
 c_\Delta\le
 \frac{\bigl(\sqrt{\Delta+\Delta^{-1}}-1\bigr)^2}
      {\Delta^2+1}.
\end{equation}
Consequently,
\[
 \limsup_{\Delta\to\infty}\Delta c_\Delta\le1.
\]
\end{proposition}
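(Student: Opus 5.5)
The plan is to turn the deficit identity of \cref{prop:deficit} into a per-vertex inequality, using a lower bound on $\rho(G)$ obtained from the square graph. Let $G$ be connected with $\Delta(G)=\Delta\ge4$ and $n=|V(G)|$. By \eqref{eq:margin-packing},
\[
  \tauD(G)-\HY(G)=\frac{\sum_{v}(\Delta+1-d(v))-(\Delta^2+1)\rho(G)}{\Delta^2+1}.
\]
So it suffices to show that
\[
  \sum_v(\Delta+1-d(v))-(\Delta^2+1)\rho(G)\le n\bigl(\sqrt{\Delta+\Delta^{-1}}-1\bigr)^2 .
\]

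First, a packing of $G$ is exactly an independent set in the square $G^2$. We apply the Caro--Wei bound to $G^2$:
\[
  \rho(G)\ge\sum_v\frac{1}{d_{G^2}(v)+1}.
\]
The vertices within distance two of $v$ are its neighbours together with their other neighbours. Hence $d_{G^2}(v)\le\sum_{u\in N(v)}d(u)\le d(v)\Delta$, and therefore
\[
  \rho(G)\ge\sum_v\frac{1}{1+\Delta d(v)}.
\]
Substituting this bound shows that the numerator is at most $\sum_v f(d(v))$, where
\[
  f(d)=\Delta+1-d-\frac{\Delta^2+1}{1+\Delta d}.
\]
Since $G$ is connected with $\Delta\ge4$, every degree lies in $[1,\Delta]$. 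It is harmless, however, to maximize $f$ over all real $d\ge0$.

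Next, we optimize. Put $x=1+\Delta d$, so that
\[
  f=\Delta+1+\Delta^{-1}-\frac{x}{\Delta}-\frac{\Delta^2+1}{x}.
\]
By AM--GM (or by calculus), the maximum is attained at $x=\sqrt{\Delta(\Delta^2+1)}$. This gives
\[
  f\le\Delta+1+\Delta^{-1}-2\sqrt{\Delta+\Delta^{-1}}=\bigl(\sqrt{\Delta+\Delta^{-1}}-1\bigr)^2 .
\]
Summing over $v$ and dividing by $(\Delta^2+1)n$ yields \eqref{eq:c-delta-upper} for every admissible $G$. Taking the supremum then bounds $c_\Delta$.

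Finally, for the limsup, note that $(\sqrt{\Delta+\Delta^{-1}}-1)^2=\Delta(1+O(\Delta^{-1/2}))$, so $\Delta c_\Delta\le\Delta^2(1+o(1))/(\Delta^2+1)\to1$. The only step that needs care is the degree estimate in $G^2$ together with applying Caro--Wei in its correct form. After that, the optimization is a one-variable AM--GM with no real obstacle, and the exact form of the stated constant is what confirms that this is the intended route.
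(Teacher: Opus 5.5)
Your proof is correct and follows essentially the paper's route: Caro--Wei applied to $G^2$, the estimate $d_{G^2}(v)\le\sum_{u\in N(v)}d(u)\le\Delta\,d(v)$, and a one-variable optimization that produces $\bigl(\sqrt{\Delta+\Delta^{-1}}-1\bigr)^2$. The only difference is the order of steps. You keep the bound per vertex and maximize pointwise in $d(v)$ by AM--GM, whereas the paper first averages, using AM--HM and $\sum_v d(v)^2\le\Delta n\bar d$, and then maximizes a concave function of $\bar d$; your version gives the same constant and does not need to check that the maximizer lies in $[0,\Delta]$.
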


\begin{proof}
Let $G$ have order $n$, average degree $\bar d$, and maximum degree
$\Delta$.  In the square graph $G^2$, two distinct vertices are adjacent when
their distance in $G$ is at most two.  Thus packings in $G$ are precisely
independent sets in $G^2$.
 Choose a uniformly random ordering of $V(G^2)$ and retain each vertex that
 precedes all of its neighbours.  The retained set is independent, and a
 vertex $v$ is retained with probability $1/(d_{G^2}(v)+1)$.  Hence the
 Caro--Wei argument~\cite{Caro1979,Wei1981}, followed by the
 arithmetic--harmonic mean inequality, gives
\[
 \rho(G)
 \ge \sum_{v\in V(G)}\frac{1}{d_{G^2}(v)+1}
 \ge \frac{n^2}{n+\sum_v d_{G^2}(v)}.
\]
 For fixed $v$, map a neighbour $u$ to the incidence $(u,v)$; for every
 vertex $w$ at distance two, choose a path $vuw$ and map $w$ to $(u,w)$.
 Different target vertices give different incidences, all counted by
 $\sum_{u\in N_G(v)}d_G(u)$, so
\[
 d_{G^2}(v)\le\sum_{u\in N_G(v)}d_G(u).
\]
Summing first over $v$ and then over their neighbours yields
\[
 \sum_v d_{G^2}(v)
 \le\sum_v d_G(v)^2
 \le\Delta\sum_vd_G(v)=\Delta n\bar d.
\]
It follows that $\rho(G)/n\ge(1+\Delta\bar d)^{-1}$.  Substitution in
the margin identity~\eqref{eq:margin-packing} gives
\begin{equation}\label{eq:gap-average-degree-upper}
 \frac{\tauD(G)-\HY(G)}{n}
 \le
 \frac{\Delta+1-\bar d-(\Delta^2+1)/(1+\Delta\bar d)}
      {\Delta^2+1}.
\end{equation}
The numerator on the right is strictly concave in $\bar d$.  Its unique
maximum on $[0,\Delta]$ occurs at
\[
\bar d=\sqrt{\Delta+\Delta^{-1}}-\Delta^{-1},
\]
which lies in $[0,\Delta]$ for $\Delta\ge4$.  The maximum value is
$\bigl(\sqrt{\Delta+\Delta^{-1}}-1\bigr)^2$.  Taking the supremum in
\eqref{eq:c-delta-definition} proves \eqref{eq:c-delta-upper}.  Multiplying
its right-hand side by $\Delta$ and taking the limit proves the final
assertion.
\end{proof}

\subsection{Designing a matching lower bound}

We use an elementary family of diameter-two fibres.  For $s\ge2$, the
\emph{rook graph} $R_s=K_s\mathbin{\square}K_s$ has vertex set
$[s]\times[s]$, with two distinct pairs adjacent when they agree in one
coordinate.

\begin{lemma}\label{lem:rook-parameters}
The graph $R_s$ has order $q=s^2$, is $r$-regular with
$r=2(s-1)$, and has diameter two.
\end{lemma}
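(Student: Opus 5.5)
The plan is to verify the three assertions of \cref{lem:rook-parameters} directly from the definition of $R_s$. None of them needs an earlier result.

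\textbf{Order.} The vertex set is $[s]\times[s]$, so $q=s^2$.

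\textbf{Regularity.} Fix a vertex $(i,j)$. Its neighbours are the pairs $(i,j')$ with $j'\ne j$, of which there are $s-1$. They also include the pairs $(i',j)$ with $i'\ne i$, another $s-1$ pairs. No pair lies in both sets, since it would then agree with $(i,j)$ in both coordinates and so equal $(i,j)$. Hence every vertex has degree $2(s-1)$, and $R_s$ is $r$-regular with $r=2(s-1)$.

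\textbf{Diameter.} Take two distinct vertices $(i,j)$ and $(i',j')$.
\begin{itemize}
\item If they agree in one coordinate, they are adjacent.
\item If $i\ne i'$ and $j\ne j'$, then $(i,j')$ agrees with $(i,j)$ in the first coordinate and with $(i',j')$ in the second. Since $j\ne j'$ and $i\ne i'$, the vertex $(i,j')$ differs from both endpoints, so $(i,j)\,(i,j')\,(i',j')$ is a path of length two.
\end{itemize}
Thus the diameter is at most two. Because $s\ge2$, the vertices $(1,1)$ and $(2,2)$ exist and differ in both coordinates, so they are nonadjacent. The diameter is therefore exactly two, and connectivity follows at once.

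There is no real obstacle here. The only points needing care are the disjointness of the two neighbour classes, which is used in the degree count, and the use of $s\ge2$ to get a nonadjacent pair, so that the diameter is exactly two rather than one. These facts are what allow $R_s$ to be used later as a fibre $R$ in \cref{thm:loaded-tree}, which requires a connected $r$-regular graph of diameter at most two.
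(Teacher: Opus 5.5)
Your proposal is correct and follows the paper's proof essentially step for step: the same count of $s^2$ pairs, the same row-plus-column degree count giving $2(s-1)$, the same length-two path $(i,j),(i,j'),(i',j')$ for pairs differing in both coordinates, and the same use of $s\ge2$ to make the diameter exactly two. Your explicit checks that the two neighbour classes are disjoint and that $(i,j')$ is distinct from both endpoints are points the paper leaves implicit.
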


\begin{proof}
There are $s^2$ ordered pairs.  From $(i,j)$, the $s-1$ other vertices in
row $i$ and the $s-1$ other vertices in column $j$ are its distinct
neighbours, so the degree is $2(s-1)$.  Two vertices that agree in a
coordinate are adjacent.  If $(i,j)$ and $(i',j')$ differ in both
coordinates, then
\[
 (i,j),(i,j'),(i',j')
\]
is a path of length two.  Such nonadjacent pairs exist for $s\ge2$, and
therefore the diameter is exactly two.
\end{proof}

The loaded-tree formula also explains the choice below.  For an $r$-regular
diameter-two fibre of order $q$, with root load $L=\Delta-r$, its limiting
normalized gap is
\[
 \frac{\Delta-r+1}{\Delta^2+1}
 -\frac{\Delta^2+3}{(\Delta^2+1)q}.
\]
Thus a first-order sharp construction should keep $r$ small compared with
$\Delta$ while making $q$ large.  The rook graphs provide the explicit tradeoff
$r=2(s-1)$ and $q=s^2$.  After multiplication by $\Delta$, the two losses from
the target value $1$ have orders $s/\Delta$ and $\Delta/s^2$; balancing them
gives $s\asymp\Delta^{2/3}$.

Fix now an integer $\Delta\ge8$, and put
\begin{equation}\label{eq:rook-choice}
 s=\left\lfloor\Delta^{2/3}\right\rfloor,
 \qquad q=s^2,
 \qquad r=2(s-1),
 \qquad L=\Delta-r=\Delta-2s+2.
\end{equation}
Here $s\ge4$.  Since $s\le\Delta^{2/3}$ and
$\Delta\ge2\Delta^{2/3}$ for $\Delta\ge8$, we also have $L\ge2$.

For every integer $k\ge L+1$, choose a tree $T_{L,k}$ of order $k$ and
maximum degree exactly $L$.  Such trees exist: start with $K_{1,L}$ and,
when $k>L+1$, attach the additional vertices successively beyond one leaf.
Take one copy of $R_s$ for
each vertex of $T_{L,k}$, choose a root in every copy, and, for every edge of
$T_{L,k}$, join the two corresponding roots.  Denote the resulting graph by
$R_{\Delta,k}$.  Equivalently, this is the rooted product of $T_{L,k}$ with
the rooted rook graph.

\begin{lemma}[Rooted rook family]\label{lem:rook-rooted-family}
For every $\Delta\ge8$ and every $k\ge L+1$, the graph $R_{\Delta,k}$ is
connected, has maximum degree exactly $\Delta$, and satisfies
\begin{align*}
 |V(R_{\Delta,k})|&=qk,&
 |E(R_{\Delta,k})|&=\frac{rqk}{2}+k-1,\\
 \rho(R_{\Delta,k})&=k,&
 \tauD(R_{\Delta,k})&=(q-1)k.
\end{align*}
\Needspace{8\baselineskip}
Moreover,
\begin{equation}\label{eq:rook-normalized-gap-finite-k}
 \frac{\tauD(R_{\Delta,k})-\HY(R_{\Delta,k})}
      {|V(R_{\Delta,k})|}
 =\phi_\Delta+
   \frac{2}{(\Delta^2+1)s^2k},
\end{equation}
where
\begin{equation}\label{eq:phi-delta}
 \phi_\Delta=
 \frac{\Delta-2s+3}{\Delta^2+1}
 -\frac{\Delta^2+3}{(\Delta^2+1)s^2}.
\end{equation}
\end{lemma}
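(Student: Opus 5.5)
The plan is to obtain the lemma as a direct specialization of \cref{thm:loaded-tree}. By \cref{lem:rook-parameters}, $R_s$ is a connected $r$-regular graph of order $q=s^2$ and diameter two, with $r=2(s-1)$. The construction of $R_{\Delta,k}$ is exactly a loaded tree $G(T_{L,k},R_s,p)$ in which every port $p_{i,e}$ of copy $i$ is its chosen root. Hence the only work is to check the two side hypotheses of \cref{thm:loaded-tree} and then simplify its gap formula.

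\textbf{Hypotheses.}
\begin{itemize}
 \item Since $k\ge L+1\ge3$, the tree has $k\ge2$ vertices.
 \item In copy $i$ the root carries load $\deg_{T_{L,k}}(i)$, and every other vertex has load $0$. So the maximum port load equals $\Delta(T_{L,k})=L$ exactly, as $T_{L,k}$ was chosen with maximum degree exactly $L$.
 \item Since $q=s^2\ge16>1$, each copy has a non-root vertex, which serves as an unused selector.
\end{itemize}
\cref{thm:loaded-tree} then gives connectivity, $|V|=qk$, $|E|=rqk/2+k-1$, $\rho=k$, $\tauD=(q-1)k$, and maximum degree $r+L$. By \eqref{eq:rook-choice}, $r+L=2(s-1)+\Delta-2s+2=\Delta$, which is the required maximum degree.

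\textbf{Normalized gap.} Divide \eqref{eq:loaded-tree-gap} by $|V|=qk$ and use $r+L=\Delta$ to get
\[
 \frac{(L+1)q-\Delta^2-3}{(\Delta^2+1)q}+\frac{2}{(\Delta^2+1)qk}
 =\frac{L+1}{\Delta^2+1}-\frac{\Delta^2+3}{(\Delta^2+1)s^2}+\frac{2}{(\Delta^2+1)s^2k}.
\]
Substituting $L+1=\Delta-2s+3$ gives exactly $\phi_\Delta$ from \eqref{eq:phi-delta} plus the stated correction term, which is \eqref{eq:rook-normalized-gap-finite-k}.

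I do not expect a genuine obstacle. The only point needing care is that the maximum load is attained \emph{exactly}, rather than merely bounded by $L$, since the definition of $c_\Delta$ requires $\Delta(G)=\Delta$ with equality. This is secured by the choice of $T_{L,k}$ with maximum degree exactly $L$, together with $L\ge2$ (established after \eqref{eq:rook-choice}) and $k\ge L+1$, which guarantee that such a tree exists.
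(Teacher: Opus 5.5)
Your proposal is correct and is essentially the paper's own argument. The paper also presents the lemma as the repeated-root specialization of \cref{thm:loaded-tree}, with the exact maximum degree secured by $\Delta(T_{L,k})=L$ as you note; the only difference is that it re-derives the order, size, degree, packing and margin computations inline, whereas you check the theorem's hypotheses and cite its conclusions.
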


\begin{proof}
This is the repeated-root specialization of \cref{thm:loaded-tree}; we spell
out the parameters because exact maximum degree and finite $k$ are part of
the definition of $c_\Delta$.

The order and size formulas follow from the $k$ disjoint fibres and the
$k-1$ tree edges.  A root over a vertex $x$ of $T_{L,k}$ has degree
$r+d_{T_{L,k}}(x)$, while every non-root vertex has degree $r$.  Because
$\Delta(T_{L,k})=L$, the actual maximum degree of the product is
$r+L=\Delta$.

Every fibre has diameter two, so a packing contains at most one vertex from
each fibre.  Conversely, choose one non-root vertex in every fibre.  A path
between choices in distinct fibres must travel from the first choice to its
root, traverse at least one tree edge, and travel from the second root to the
second choice.  Its length is therefore at least three.  Hence
 $\rho(R_{\Delta,k})=k$, and the complement identity gives
 $\tauD(R_{\Delta,k})=(q-1)k$.  This special case also follows from the
published packing formula for rooted products
\cite[Theorem~5.1]{MojdehEtAl2020}.

Using the actual maximum degree $\Delta=r+L$ and the displayed parameters,
the cleared margin is
\begin{align*}
 \margin(R_{\Delta,k})
 &=\Delta(\Delta-1)qk+rqk+2(k-1)
   -(\Delta^2+1)(q-1)k\\
 &=k\bigl(\Delta^2+3-(L+1)q\bigr)-2.
\end{align*}
Negating, dividing by $(\Delta^2+1)qk$, and substituting
$q=s^2$ and $L+1=\Delta-2s+3$ proves
\eqref{eq:rook-normalized-gap-finite-k} and \eqref{eq:phi-delta}.
\end{proof}

Because $R_{\Delta,k}$ is a finite connected graph with maximum degree
\emph{exactly} $\Delta$ for every $k\ge L+1$, each member is admissible in
\eqref{eq:c-delta-definition}.  There are admissible values of $k$ tending
to infinity, so \eqref{eq:rook-normalized-gap-finite-k} and the definition of
a supremum give
\begin{equation}\label{eq:c-at-least-phi}
 c_\Delta\ge
 \lim_{k\to\infty}
 \frac{\tauD(R_{\Delta,k})-\HY(R_{\Delta,k})}
      {|V(R_{\Delta,k})|}
 =\phi_\Delta.
\end{equation}
The limit in \eqref{eq:c-at-least-phi} is not asserted to be attained by a
finite graph.

\begin{theorem}[Exact first-order asymptotics]\label{thm:exact-c-delta-limit}
For every integer $\Delta\ge8$,
\begin{equation}\label{eq:quantitative-c-lower}
 c_\Delta\ge \frac1\Delta-6\Delta^{-4/3}.
\end{equation}
Furthermore,
\begin{equation}\label{eq:exact-c-limit}
 \lim_{\Delta\to\infty}\Delta c_\Delta=1.
\end{equation}
\end{theorem}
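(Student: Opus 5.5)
We combine the lower bound \eqref{eq:c-at-least-phi} with an explicit estimate of $\phi_\Delta$ from \eqref{eq:phi-delta}. The upper bound in \cref{prop:c-delta-upper} then closes the limit.

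To prove \eqref{eq:quantitative-c-lower}, write $s=\lfloor\Delta^{2/3}\rfloor$, so that $\Delta^{2/3}-1<s\le\Delta^{2/3}$. We want a lower bound for
\[
 \phi_\Delta=\frac{\Delta-2s+3}{\Delta^2+1}-\frac{\Delta^2+3}{(\Delta^2+1)s^2}.
\]
\begin{itemize}
\item For the first term, use $s\le\Delta^{2/3}$ to get $\Delta-2s+3\ge\Delta-2\Delta^{2/3}$. Then use
\[
 \frac{\Delta}{\Delta^2+1}=\frac1\Delta-\frac{1}{\Delta(\Delta^2+1)}\ge\frac1\Delta-\Delta^{-3},
 \qquad
 \frac{2\Delta^{2/3}}{\Delta^2+1}\le2\Delta^{-4/3}.
\]
\item For the second term, bound $(\Delta^2+3)/(\Delta^2+1)\le1+2\Delta^{-2}$. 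Since $\Delta\ge8$ gives $\Delta^{2/3}\ge4$, we have $s\ge\Delta^{2/3}-1\ge\tfrac34\Delta^{2/3}$, hence $1/s^2\le\tfrac{16}{9}\Delta^{-4/3}$.
\end{itemize}
Combining these, $\phi_\Delta\ge\Delta^{-1}-\bigl(2+\tfrac{16}{9}(1+2\Delta^{-2})\bigr)\Delta^{-4/3}-\Delta^{-3}$. For $\Delta\ge8$ the bracket is below $3.9$. The term $\Delta^{-3}$ is also far smaller than $\Delta^{-4/3}$, so the total loss is at most $6\Delta^{-4/3}$.

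The only real obstacle is bookkeeping the constant 6 at small $\Delta$. There is ample room, but we must make sure $s\ge\tfrac34\Delta^{2/3}$ holds already at $\Delta=8$. It does: there $\Delta^{2/3}=4$, so $s=4\ge3$.

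For \eqref{eq:exact-c-limit}, multiply \eqref{eq:quantitative-c-lower} by $\Delta$. This gives $\Delta c_\Delta\ge1-6\Delta^{-1/3}$, so $\liminf_{\Delta\to\infty}\Delta c_\Delta\ge1$. The matching $\limsup_{\Delta\to\infty}\Delta c_\Delta\le1$ is the final assertion of \cref{prop:c-delta-upper}. It follows from $\Delta(\sqrt{\Delta+\Delta^{-1}}-1)^2/(\Delta^2+1)\to1$, and together the two bounds give the limit.
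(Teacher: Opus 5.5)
Your proof is correct and takes the paper's route: bound $\phi_\Delta$ from below term by term using $\Delta^{2/3}-1<s\le\Delta^{2/3}$, then invoke $c_\Delta\ge\phi_\Delta$ from \eqref{eq:c-at-least-phi} and the square-graph upper bound for the limit. The only difference is bookkeeping: you bound $1/s^2$ via $s\ge\frac34\Delta^{2/3}$, whereas the paper uses $\Delta^2<(s+1)^3<2s^3$ on the scaled error $1-\Delta\phi_\Delta$, and your version incidentally gives a constant below $3.9$ in place of $6$.
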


\begin{proof}
For $s=\lfloor\Delta^{2/3}\rfloor$, direct simplification of
\eqref{eq:phi-delta} gives the exact scaled-error identity
\begin{equation}\label{eq:exact-scaled-error}
 1-\Delta\phi_\Delta
 =\frac{\Delta(2s-3)+1}{\Delta^2+1}
  +\frac{\Delta(\Delta^2+3)}{(\Delta^2+1)s^2}.
\end{equation}
Both terms are positive.  Since $\Delta\ge8$ and $s\ge4$,
\[
 \frac{\Delta(2s-3)+1}{\Delta^2+1}
 \le \frac{2s}{\Delta}.
\]
 Since $s=\lfloor\Delta^{2/3}\rfloor$, we have
 $\Delta^{2/3}<s+1$, and hence $\Delta^2<(s+1)^3$.  Moreover $s\ge4$ gives
 $(s+1)^3<2s^3$.  Also
$(\Delta^2+3)/(\Delta^2+1)<2$.  Therefore
\[
 \frac{\Delta(\Delta^2+3)}{(\Delta^2+1)s^2}
 <\frac{2\Delta}{s^2}
 <\frac{4s}{\Delta}.
\]
Together with $s\le\Delta^{2/3}$, equation
\eqref{eq:exact-scaled-error} yields
\[
 0<1-\Delta\phi_\Delta
 <\frac{6s}{\Delta}
 \le6\Delta^{-1/3}.
\]
Combining this estimate with \eqref{eq:c-at-least-phi} proves
\eqref{eq:quantitative-c-lower} and
$\liminf_{\Delta\to\infty}\Delta c_\Delta\ge1$.  The reverse inequality
$\limsup_{\Delta\to\infty}\Delta c_\Delta\le1$ is
\cref{prop:c-delta-upper}, and hence \eqref{eq:exact-c-limit} follows.
\end{proof}

\begin{remark}\label{rem:second-order-open}
The theorem determines the first-order term $c_\Delta\sim1/\Delta$ only.
The rook construction gives a scaled error of order $O(\Delta^{-1/3})$, but
the global second-order behaviour of $c_\Delta$ remains open; in particular,
no optimality of that exponent is claimed for arbitrary connected graphs.
\end{remark}

%% file: refs.bib
@article{HenningYeo2012,
  author  = {Michael A. Henning and Anders Yeo},
  title   = {Identifying Vertex Covers in Graphs},
  journal = {The Electronic Journal of Combinatorics},
  volume  = {19},
  number  = {4},
  pages   = {P32},
  year    = {2012},
  doi     = {10.37236/2114}
}

@phdthesis{Moncel2005,
  author = {Julien Moncel},
  title  = {Codes identifiants dans les graphes},
  school = {Universit{\'e} Joseph Fourier, Grenoble I},
  year   = {2005},
  url    = {https://theses.hal.science/tel-00010293}
}

@article{HenningYeo2014,
  author  = {Michael A. Henning and Anders Yeo},
  title   = {Distinguishing-Transversal in Hypergraphs and Identifying Open Codes in Cubic Graphs},
  journal = {Graphs and Combinatorics},
  volume  = {30},
  number  = {4},
  pages   = {909--932},
  year    = {2014},
  doi     = {10.1007/s00373-013-1311-2}
}

@article{BanikEtAl2020,
  author  = {Aritra Banik and Pratibha Choudhary and Venkatesh Raman and Saket Saurabh},
  title   = {Fixed-Parameter Tractable Algorithms for Tracking Shortest Paths},
  journal = {Theoretical Computer Science},
  volume  = {846},
  pages   = {1--13},
  year    = {2020},
  doi     = {10.1016/j.tcs.2020.09.006}
}

@article{BozovicEtAl2022,
  author  = {Dragana Bo{\v z}ovi{\'c} and Aleksander Kelenc and Iztok Peterin and Ismael G. Yero},
  title   = {Incidence Dimension and 2-Packing Number in Graphs},
  journal = {RAIRO. Operations Research},
  volume  = {56},
  number  = {1},
  pages   = {199--211},
  year    = {2022},
  doi     = {10.1051/ro/2022001}
}

@book{ReadWilson1998,
  author    = {Ronald C. Read and Robin J. Wilson},
  title     = {An Atlas of Graphs},
  publisher = {Oxford University Press},
  address   = {Oxford},
  year      = {1998},
  isbn      = {9780198532897},
  doi       = {10.1093/oso/9780198532897.001.0001}
}

@inproceedings{HagbergEtAl2008,
  author    = {Aric A. Hagberg and Daniel A. Schult and Pieter J. Swart},
  title     = {Exploring Network Structure, Dynamics, and Function Using {NetworkX}},
  booktitle = {Proceedings of the 7th Python in Science Conference},
  editor    = {Ga{\"e}l Varoquaux and Travis Vaught and Jarrod Millman},
  address   = {Pasadena, California},
  pages     = {11--15},
  month     = aug,
  year      = {2008},
  url       = {https://conference.scipy.org/proceedings/SciPy2008/paper_2/}
}

@misc{NetworkXAtlas35,
  author = {{NetworkX developers}},
  title  = {{graph\_atlas\_g} in {NetworkX} 3.5},
  year   = {2025},
  url    = {https://networkx.org/documentation/networkx-3.5/reference/generated/networkx.generators.atlas.graph_atlas_g.html},
  note   = {Accessed 18 August 2026}
}

@misc{McKayGraphData,
  author = {Brendan D. McKay},
  title  = {Combinatorial Data: Graphs},
  url    = {https://users.cecs.anu.edu.au/~bdm/data/graphs.html},
  note   = {Accessed 18 August 2026}
}

@article{GoedgebeurEtAl2026,
  author  = {Jan Goedgebeur and Jarne Renders and G{\'a}bor Wiener and Carol T. Zamfirescu},
  title   = {Network Fault Costs Based on Minimum Leaf Spanning Trees},
  journal = {Applied Mathematics and Computation},
  volume  = {525},
  pages   = {130057},
  year    = {2026},
  doi     = {10.1016/j.amc.2026.130057},
  eprint  = {2502.10213},
  archivePrefix = {arXiv},
  primaryClass  = {math.CO}
}

@misc{HouseOfGraphs53055,
  author = {Jarne Renders},
  title  = {House of Graphs, Graph 53055},
  year   = {2025},
  url    = {https://houseofgraphs.org/graphs/53055},
  note   = {Comment: ``A smallest graph with fault cost 3''; uploaded 10 January 2025; accessed 18 August 2026}
}

@techreport{Caro1979,
  author      = {Yair Caro},
  title       = {New Results on the Independence Number},
  institution = {Tel Aviv University},
  year        = {1979}
}

@techreport{Wei1981,
  author      = {V. K. Wei},
  title       = {A Lower Bound on the Stability Number of a Simple Graph},
  institution = {Bell Laboratories},
  number      = {TM 81-11217-9},
  address     = {Murray Hill, NJ},
  year        = {1981}
}

@article{GodsilMcKay1978,
  author  = {Godsil, C. D. and McKay, B. D.},
  title   = {A New Graph Product and Its Spectrum},
  journal = {Bulletin of the Australian Mathematical Society},
  volume  = {18},
  number  = {1},
  pages   = {21--28},
  month   = feb,
  year    = {1978},
  doi     = {10.1017/S0004972700007760},
  url     = {https://doi.org/10.1017/S0004972700007760}
}

@article{MojdehEtAl2020,
  author  = {Mojdeh, Doost Ali and Peterin, Iztok and Samadi, Babak and Yero, Ismael G.},
  title   = {{(Open) Packing Number of Some Graph Products}},
  journal = {Discrete Mathematics \& Theoretical Computer Science},
  volume  = {22},
  number  = {4},
  pages   = {Paper No. 1},
  month   = aug,
  year    = {2020},
  doi     = {10.23638/DMTCS-22-4-1},
  url     = {https://dmtcs.episciences.org/6730},
  eprint  = {1901.06813},
  archiveprefix = {arXiv},
  primaryclass  = {math.CO}
}

@article{SheeHo1996,
  author  = {Shee, Sze-Chin and Ho, Yong-Song},
  title   = {The Cordiality of the Path-Union of {$n$} Copies of a Graph},
  journal = {Discrete Mathematics},
  volume  = {151},
  number  = {1--3},
  pages   = {221--229},
  month   = may,
  year    = {1996},
  doi     = {10.1016/0012-365X(94)00099-5},
  url     = {https://doi.org/10.1016/0012-365X(94)00099-5}
}

@article{VaidyaKanani2011,
  author  = {Vaidya, S. K. and Kanani, K. K.},
  title   = {Some New Product Cordial Graphs},
  journal = {Mathematics Today},
  volume  = {27},
  pages   = {64--70},
  month   = jun,
  year    = {2011},
  issn    = {0976-3228},
  note    = {Proceedings of Maths Meet--2011; online scan accessed 19 August 2026},
  url     = {https://www.researchgate.net/publication/258024450_Some_New_Product_Cordial_Graphs}
}
